\definecolor{blue-url}{RGB}{0,0,100}
\newtheorem{theorem}{Theorem}
\newtheorem{corollary}[theorem]{Corollary}
\newcommand{\ts}{\textsuperscript}
\begin{document}

\title[A generalization of Euler's theorem on the series of reciprocals of primes]{A generalization to number fields \\ of Euler's theorem on the series of reciprocals of primes}
\author{Salvatore Tringali}
\address{College of Mathematics and Information Science, Hebei Normal University | Shijiazhuang 050024, China}
\email{salvo.tringali@gmail.com}
\urladdr{http://imsc.uni-graz.at/tringali}

\subjclass[2010]{Primary 11R04, 11R27 Secondary 11R42, 13F15}

\keywords{Atoms, Euler, irreducible elements, number fields, primes, rings of integers, series of reciprocals, zeta functions.}

\begin{abstract}
	Let $X$ be a set of positive integers, and let $\mathbb Z_K$ be the ring of integers of a number field $K$ of degree $n$. Denote by $N(I)$ the absolute norm of an ideal $I$ of $\mathbb Z_K$, and by $\mathcal A$ the set of principal ideals $a\mathbb Z_K$ such that $a$ is an atom of $\mathbb Z_K$ and $a$ divides $m$ for some $m \in X$. 
	
	Building upon the ideas of Clarkson from [Proc. Amer. Math. Soc. \textbf{17} (1966), 541], we show that, if the series $\sum_{m \in X} 1/m$ diverges, then so does the series $\sum_{\mathfrak a \in \mathcal A} |N(\mathfrak a)|^{-\nicefrac{1}{n}}$.
	Most notably, this generalizes a classical theorem of Euler on the series of reciprocals of positive rational primes.
\end{abstract}
\maketitle
\thispagestyle{empty}

\section{Introduction}
\label{sec:intro}
Let $\mathbb P$ be the set of positive rational primes. It is a classical result of Euler, hereinafter simply referred to as Euler's theorem, that the series
$\sum_{p \in \mathbb P} 1/p$ diverges, a short proof of which was given by Clarkson \cite{Clark66}. (Notation and terminology, if not explained when first used, are standard or should be clear from the context.)

The goal of the present paper is primarily to generalize Euler's theorem to the ring of integers of a number field, by an elementary approach that is essentially inspired to Clarkson's proof. 

\section{Main result}
Let $\mathbb Z_K$ be the ring of integers of a number field $K$,
and let $\mathcal I$ be the set of all non-zero ideals of $\mathbb Z_K$. For an ideal $\mathfrak i \in \mathcal I$ we denote by $N(\mathfrak i)$ the \emph{absolute norm} of $\mathfrak i$, that is, the cardinality of the quotient ring $\mathbb Z_K/\mathfrak i$. It is a basic result in number theory that $\mathbb Z_K$ is a Dedekind domain with the \emph{\textup{(FN)}-property}, i.e., such that $N(\mathfrak i) < \infty$ for every $\mathfrak i \in \mathcal I$, see e.g. \cite[p. 43 and Corollary to Theorem 1.20, p. 16]{Nar04}. This implies that, for each $\kappa \in \mathbb R^+$, there are only finitely many ideals $\mathfrak i \in \mathcal I$ with $N(\mathfrak i) \le \kappa$, see e.g. \cite[Theorem 1.6(ii)]{Nar04}. Consequently, it makes sense, for every $\mathcal J \subseteq \mathcal I$, to introduce the function
\[
\zeta_{K,\mathcal{J}}: \mathbb R \to \mathbb R \cup \{\infty\}: s \mapsto \sum_{\mathfrak i \in \mathcal J} \frac{1}{N(\mathfrak i)^s}
\]
and investigate its properties, as similarly done in the study of Dedekind zeta functions (the case $\mathcal J = \mathcal I$). Here as usual, we adopt the convention that
\[
\sum_{\mathfrak i \in \mathcal J} := \lim_{\kappa \to \infty}\sum_{\mathfrak i \in \mathcal J\,:\, N(\mathfrak i) \le \kappa}.
\]
In particular, let a \emph{unit} of $\mathbb Z_K$ be an element $u \in \mathbb Z_K$ such that $uv = 1_K$ for some $v \in \mathbb Z_K$, and an \emph{atom} (or \emph{irreducible element}) of $\mathbb Z_K$ be a non-unit element $a \in \mathbb Z_K$ such that $a \ne xy$ for all non-unit elements $x, y \in \mathbb Z_K$. We call two elements $x, y \in \mathbb Z_K$ \emph{associated} if $x = uy$ for some unit $u \in \mathbb Z_K$, and we write $y \mid_{\mathbb Z_K} x$ to mean that $y \in \mathbb Z_K$ and $x \in y \mathbb Z_K$.

With these preliminaries in place, we have the following theorem (the main contribution of the paper).

\begin{theorem}\label{thm:1}
	Let $K$ be a number field of degree $n$, let $X$ be a set of positive integers such that $\sum_{m \in X} 1/m = \infty$, and let $\mathcal A$ be the set of principal ideals $a\mathbb Z_K$ of $\mathbb Z_K$ for which $a$ is an atom and $a \mid_{\mathbb Z_K} m$ for some $m \in X$. Then $\zeta_{K,\mathcal{A}}(1/n) = \infty$, and hence $\mathcal A$ is infinite.
\end{theorem}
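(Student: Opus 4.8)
The plan is to argue by contradiction, assuming $\zeta_{K,\mathcal A}(1/n) = S < \infty$ and deducing that $\sum_{m\in X} 1/m < \infty$. The engine of the proof is the multiplicativity of the absolute norm together with the elementary identity $N(m\mathbb Z_K) = m^n$ for a positive integer $m$, which gives $N(m\mathbb Z_K)^{-1/n} = 1/m$ and, more generally, $1/m = \prod_i N(a_i\mathbb Z_K)^{-1/n}$ whenever $m = u\, a_1 \cdots a_r$ is a factorization into atoms (with $u$ a unit). Such a factorization exists because $\mathbb Z_K$, being Dedekind and hence Noetherian, is atomic; and each $a_i \mid_{\mathbb Z_K} m$, so $a_i\mathbb Z_K \in \mathcal A$. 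This is what makes the exponent $1/n$ the natural one: the quantity $N(\mathfrak a)^{1/n}$ behaves like a multiplicative ``size'' extending $m\mapsto m$ on the integers.

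Next I would import Clarkson's idea of discarding the small primes, adapted to the present setting. Since the terms $N(\mathfrak a)^{-1/n}$ are nonnegative and $S < \infty$, and since the \textup{(FN)}-property guarantees only finitely many ideals below any norm bound, I can fix a threshold $\kappa$ so that the finite set $\mathcal A_0 := \{\mathfrak a \in \mathcal A : N(\mathfrak a) \le \kappa\}$ has complementary tail $\sigma := \sum_{\mathfrak a \in \mathcal A\setminus\mathcal A_0} N(\mathfrak a)^{-1/n} < 1$. For each $m\in X$ I fix one atom factorization and split the resulting ideals of $\mathcal A$ into those lying in $\mathcal A_0$ and those in $\mathcal A_1 := \mathcal A\setminus\mathcal A_0$, collecting them into a ``small part'' $\mathfrak s(m)$ and a ``large part'' $\mathfrak l(m)$, so that $m\mathbb Z_K = \mathfrak s(m)\,\mathfrak l(m)$ and hence $1/m = N(\mathfrak s(m))^{-1/n}\, N(\mathfrak l(m))^{-1/n}$. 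Since $m\mathbb Z_K = m'\mathbb Z_K$ forces $m = m'$ for positive integers, the pair $(\mathfrak s(m), \mathfrak l(m))$ determines $m$; in particular, for a fixed value of $\mathfrak s$ the large parts $\mathfrak l(m)$ are pairwise distinct ideals.

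I would then estimate by grouping over the small part. For fixed $\mathfrak s$, each $\mathfrak l(m)$ is a product of atoms from $\mathcal A_1$, and bounding the sum over distinct such ideals by the sum over all finite multisets, and that in turn by the sum over ordered tuples, yields
\[
\sum_{m\in X:\, \mathfrak s(m) = \mathfrak s} N(\mathfrak l(m))^{-1/n} \;\le\; \sum_{t\ge 0} \sigma^t \;=\; \frac{1}{1-\sigma},
\]
where $\sigma < 1$ is exactly what makes the geometric series converge. Summing over the small parts contributes a finite Euler factor, since $N(\mathfrak a)\ge 2$ for every atom and $\mathcal A_0$ is finite:
\[
\sum_{\mathfrak s} N(\mathfrak s)^{-1/n} \;\le\; \prod_{\mathfrak a\in \mathcal A_0} \frac{1}{1 - N(\mathfrak a)^{-1/n}} \;<\; \infty .
\]
Combining the two displays gives $\sum_{m\in X} 1/m \le (1-\sigma)^{-1}\prod_{\mathfrak a\in\mathcal A_0}(1-N(\mathfrak a)^{-1/n})^{-1} < \infty$, contradicting the hypothesis; thus $\zeta_{K,\mathcal A}(1/n) = \infty$, and since a finite $\mathcal A$ would make the series finite, $\mathcal A$ must be infinite.

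The main obstacle I anticipate is that $\mathbb Z_K$ need not be a unique factorization domain, so atom factorizations are not unique and I cannot reason directly at the level of elements. The remedy, already built into the plan, is to pass immediately to principal ideals and use only the injectivity of $m\mapsto m\mathbb Z_K$ together with one-sided (upper) bounds via multisets and ordered tuples, all of which are insensitive to non-uniqueness; the rearrangement of sums is justified throughout by the nonnegativity of the terms. The other delicate point is the order of quantifiers in choosing $\mathcal A_0$: it is precisely the finiteness of $\zeta_{K,\mathcal A}(1/n)$ and the \textup{(FN)}-property that let me force the large-atom tail below $1$, which is the genuine Clarkson ingredient of the argument.
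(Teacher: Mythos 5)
Your proposal is correct and follows essentially the same route as the paper: Clarkson's device of splitting $\mathcal A$ into a finite set of ``small'' atoms (handled by a finite Euler product, using $N(\mathfrak a)\ge 2$) and a tail with sum $\sigma<1$ (handled by the geometric series), together with $N(m\mathbb Z_K)^{1/n}=m$ and atomicity of $\mathbb Z_K$ to realize each $1/m$ inside the resulting convergent expression. Your version is in fact slightly more explicit than the paper's on one point it leaves implicit, namely why distinct $m$ contribute distinct terms to the expansion (your injectivity of $m\mapsto m\mathbb Z_K$ and the multiset/tuple bounds).
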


\begin{proof}
	To begin, we recall that, if $\mathfrak i$ is a non-zero principal of $\mathbb Z_K$ and $\alpha$ is a generator of $\mathfrak i$, then 
	\begin{equation}\label{equ:(1)}
	N(\mathfrak i) = |\mathrm{N}_{K/\mathbb Q}(\alpha)| = f_\alpha(0)^{n/\deg f_\alpha},
	\end{equation}
	where  $\mathrm{N}_{K/\mathbb Q}$ is the norm of $\alpha$ relative to the field extension $K/\mathbb Q$ and $f_\alpha$ is the minimal polynomial of $\alpha$ over $K$, see e.g. \cite[p. 48 and Corollary to Proposition 2.13, p. 58]{Nar04}. 
	
	This said, assume for a contradiction that $\zeta_{K,\mathcal{A}}(1/n) < \infty$. Then there exists $\kappa_0 \in \mathbb R^+$ such that
	\begin{equation*}
	\sum_{\mathfrak a \in \mathcal A\,:\, N(\mathfrak a) \ge \kappa_0} \frac{1}{N(\mathfrak a)^{1/n}} := \lim_{\kappa \to \infty} \sum_{\mathfrak a \in \mathcal A\,:\, \kappa_0 \le N(\mathfrak a) \le \kappa} \frac{1}{N(\mathfrak a)^{1/n}} < 1.
	\end{equation*}
	By the (FN)-property of $\mathbb Z_K$, we have that the set 
	\[
	\mathcal B := \{\mathfrak a \in \mathcal A: N(\mathfrak a) < \kappa_0\}
	\] 
	is finite. Moreover, it is seen from \eqref{equ:(1)} and \cite[Proposition 3.10]{Nar04} that the absolute norm of an ideal in $\mathcal A$ is greater than or equal to $2$. So putting everything together, we find that
	\begin{equation}\label{equ:(2)}
	\left(\prod_{\mathfrak b \in \mathcal B} \sum_{j=0}^\infty \frac{1}{N(\mathfrak b)^{j/n}}\right) \cdot \sum_{i=0}^\infty \left(\sum_{\mathfrak a \in \mathcal A\setminus \mathcal B} \frac{1}{N(\mathfrak a)^{1/n}}\right)^{\!i} < \infty.
	\end{equation}
	We will however prove that the expansion of the left-hand side of the latter inequality contains the term $1/m$ for every $m \in X_{\ge 2}$, contradicting that $\sum_{m \in X} 1/m = \infty$.
	For, fix $m \in X_{\ge 2}$.
	Then \eqref{equ:(1)} implies that
	\begin{equation}\label{equ:(3)}
	N(m \mathbb Z_K)^{1/n} = |\mathrm{N}_{K/\mathbb{Q}}(m)|^{1/n} = m,
	\end{equation}
	which, in view of \cite[Proposition 3.10]{Nar04}, shows that $m$ is not a unit of $\mathbb Z_K$. But $\mathbb Z_K$, being a Dedekind domain, is Noetherian, see e.g. \cite[Theorem 1.5]{Nar04}; and Noetherian domains are atomic, that is, every non-zero non-unit element in $\mathbb Z_K$ is a finite non-empty product of atoms (although such a factorization need not be unique), see e.g. \cite[Example 1.1.5.2]{GeHK06}. As a consequence there exist a unit $u \in \mathbb Z_K$, pairwise non-associated atoms $a_1, \ldots, a_\ell \in \mathbb Z_K$ ($\ell \in \mathbb N^+$), and exponents $e_1, \ldots, e_\ell \in \mathbb N^+$ such that $m = a_1^{e_1} \cdots a_\ell^{e_\ell} u$. Because $N$ is a totally multiplicative function, see e.g. \cite[Theorem 1.16(i)]{Nar04}, it thus follows from \eqref{equ:(3)} that
	\begin{equation}\label{equ:(4)}
	m = N(a_1 \mathbb Z_K)^{e_1/n} \cdots N(a_\ell \mathbb Z_K)^{e_\ell/n} N(u\mathbb Z_K) = N(a_1 \mathbb Z_K)^{e_1/n} \cdots N(a_\ell \mathbb Z_K)^{e_\ell/n}. 
	\end{equation}
	In fact, we can assume without loss of generality that there is a non-negative integer $k$ not greater than $\ell$ such that $a_i \mathbb Z_K \in \mathcal A \setminus \mathcal B$ for $1 \le i \le k$ and $a_i \mathbb Z_K \in \mathcal B$ for $k < i \le \ell$ (note that the atoms $a_1, \ldots, a_\ell$ are all in $\mathcal A$). Then it is clear from \eqref{equ:(4)} that $1/m$ occurs as a term in the expansion of
	\[
	\left(\prod_{k < j \le \ell} \frac{1}{N(a_j\mathbb Z_K)^{e_j/n}}\right) \cdot \left(\sum_{\mathfrak a \in \mathcal A \setminus \mathcal B} \frac{1}{N(\mathfrak a)^{1/n}}\right)^{\! \max_{1 \le i \le k} e_i},
	\]
	which is in turn one of the summands contributing to the left-hand side of inequality \eqref{equ:(2)}. 
	
	This is enough to complete the proof, because it is obvious that, if $\mathcal A$ were a finite set, then the series $\sum_{\mathfrak a \in \mathcal A} |N(\mathfrak a)|^s$ would be convergent for every $s \in \mathbb R$.
\end{proof}

Finally we prove Euler's theorem and two of a number of corollaries naturally stemming from Theorem \ref{thm:1} (we continue to use $\mathbb P$ for the set of positive rational primes, as in \S{ }\ref{sec:intro}).

\begin{corollary}\label{cor:1}
	Let $K$ be a number field of degree $n$, and let $\mathcal A$ be the set of principal ideals of $\mathbb Z_K$ generated by an atom. Then $\zeta_{K,\mathcal{A}}(1/n) = \infty$, and $\mathbb Z_K$ has infinitely many pairwise non-associated atoms.
\end{corollary}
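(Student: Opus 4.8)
The plan is to derive Corollary \ref{cor:1} from Theorem \ref{thm:1} by taking $X = \mathbb{N}^+$, the set of all positive integers. Since the harmonic series $\sum_{m \in \mathbb{N}^+} 1/m$ diverges, the hypothesis of the theorem is met, and everything reduces to checking that, for this choice of $X$, the set $\mathcal{A}$ of Theorem \ref{thm:1} is exactly the set $\mathcal{A}$ of the corollary, i.e. the set of \emph{all} principal ideals of $\mathbb{Z}_K$ generated by an atom.

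First I would observe that the inclusion of the theorem's $\mathcal{A}$ into the corollary's is trivial, since the former consists of ideals generated by atoms by definition. For the opposite inclusion I need to show that every atom $a$ divides some positive rational integer in $\mathbb{Z}_K$. I would do this directly from the minimal polynomial $f_a(x) = x^d + c_{d-1}x^{d-1} + \cdots + c_1 x + c_0 \in \mathbb{Z}[x]$ of $a$: evaluating at $a$ and solving for the constant term gives $c_0 = -a(a^{d-1} + c_{d-1}a^{d-2} + \cdots + c_1)$, so that $a \mid_{\mathbb{Z}_K} c_0 = f_a(0)$; as $a \ne 0$ forces $f_a(0) \ne 0$, I conclude that $a \mid_{\mathbb{Z}_K} m$ with $m := |f_a(0)| \in \mathbb{N}^+$. (Equivalently, one may invoke the standard fact that $a$ divides its norm $\mathrm{N}_{K/\mathbb{Q}}(a) \in \mathbb{Z} \setminus \{0\}$, which is consistent with \eqref{equ:(1)}.) Hence the two sets coincide, and Theorem \ref{thm:1} immediately yields $\zeta_{K,\mathcal{A}}(1/n) = \infty$ and the infinitude of $\mathcal{A}$.

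It then remains to pass from the infinitude of $\mathcal{A}$, viewed as a set of ideals, to the existence of infinitely many pairwise non-associated atoms. Here I would use that two atoms $a, b$ generate the same principal ideal if and only if they are associated, i.e. $a\mathbb{Z}_K = b\mathbb{Z}_K$ exactly when $a = ub$ for some unit $u$; choosing one generator from each of the infinitely many distinct ideals in $\mathcal{A}$ then yields an infinite family of pairwise non-associated atoms. I expect the only point requiring genuine care to be the verification that every atom divides a positive integer, that is, the equality of the two versions of $\mathcal{A}$, since the remaining steps are either formal or a direct appeal to Theorem \ref{thm:1}.
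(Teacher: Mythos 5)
Your proposal is correct and follows the same route as the paper, which simply invokes Theorem \ref{thm:1} with $X$ the set of all positive integers and the divergence of the harmonic series; you merely make explicit two details the paper leaves tacit, namely that every atom divides the (nonzero, integral) constant term of its minimal polynomial and hence some $m \in \mathbb{N}^+$, and that distinct principal ideals yield pairwise non-associated generators.
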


\begin{proof}
	Straightforward from Theorem \ref{thm:1} and the fact that the harmonic series is divergent.
\end{proof}

\begin{theorem}[Euler's theorem]\label{thm:2}
	The series $\sum_{p \in \mathbb P} 1/p$ diverges.
\end{theorem}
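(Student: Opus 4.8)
The plan is to deduce Euler's theorem as the special case of Theorem~\ref{thm:1} (or, more directly, of Corollary~\ref{cor:1}) obtained by taking $K = \mathbb{Q}$. First I would observe that $\mathbb{Q}$ is a number field of degree $n = 1$, and its ring of integers is $\mathbb{Z}_K = \mathbb{Z}$. The units of $\mathbb{Z}$ are $\pm 1$, and the atoms of $\mathbb{Z}$ are precisely the elements $\pm p$ with $p \in \mathbb{P}$; consequently every principal ideal of $\mathbb{Z}$ generated by an atom is of the form $p\mathbb{Z}$ for a unique $p \in \mathbb{P}$, since $p\mathbb{Z} = (-p)\mathbb{Z}$ and distinct positive primes generate distinct ideals. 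Thus, if $\mathcal{A}$ denotes the set of principal ideals of $\mathbb{Z}$ generated by an atom, the map $p \mapsto p\mathbb{Z}$ is a bijection from $\mathbb{P}$ onto $\mathcal{A}$.

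Next I would compute the absolute norm along this correspondence. Since $n = 1$, formula~\eqref{equ:(1)} gives $N(p\mathbb{Z}) = |\mathrm{N}_{\mathbb{Q}/\mathbb{Q}}(p)| = |p| = p$ for each $p \in \mathbb{P}$; equivalently, $N(p\mathbb{Z}) = [\mathbb{Z} : p\mathbb{Z}] = p$ is just the index of the ideal. Therefore, with $s = 1/n = 1$, the zeta function of Corollary~\ref{cor:1} unwinds to
\[
\zeta_{K,\mathcal{A}}(1/n) = \sum_{\mathfrak{a} \in \mathcal{A}} \frac{1}{N(\mathfrak{a})^{1}} = \sum_{p \in \mathbb{P}} \frac{1}{p}.
\]

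Finally I would invoke Corollary~\ref{cor:1}, whose conclusion for this choice of $K$ is exactly $\zeta_{K,\mathcal{A}}(1/n) = \infty$. Combining this with the displayed identity yields $\sum_{p \in \mathbb{P}} 1/p = \infty$, which is the desired divergence. I do not anticipate any genuine obstacle here: the entire content has already been established in Theorem~\ref{thm:1} and Corollary~\ref{cor:1}, so the only work is the purely bookkeeping verification that the abstract data $(\mathcal{A}, N, n)$ specialize correctly over $\mathbb{Q}$ to the classical setting. The one point deserving a line of care is the identification of the atoms of $\mathbb{Z}$ with $\pm\mathbb{P}$ and the resulting bijection with $\mathcal{A}$, so that no prime is counted twice and the sum over $\mathcal{A}$ is genuinely the sum over $\mathbb{P}$; this is where associativity of $p$ and $-p$ is used.
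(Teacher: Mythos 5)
Your proposal is correct and follows exactly the paper's own route: specialize Corollary~\ref{cor:1} to $K=\mathbb{Q}$, identify the atoms of $\mathbb{Z}$ with $\pm\mathbb{P}$ so that $\mathcal{A}=\{p\mathbb{Z}:p\in\mathbb{P}\}$, and use $n=1$ together with $N(p\mathbb{Z})=p$ to read off $\zeta_{\mathbb{Q},\mathcal{A}}(1)=\sum_{p\in\mathbb{P}}1/p=\infty$. The only difference is that you spell out the bijection $p\mapsto p\mathbb{Z}$ a bit more explicitly, which is a harmless elaboration of the same argument.
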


\begin{proof}
The ring $\mathbb Z$ of ``ordinary'' integers is the ring of integers of the rational field $\mathbb Q$, and it is well known that, since $\mathbb Z$ is a PID, the atoms of $\mathbb Z$ are precisely the primes in $\mathbb P$ and their opposites, see e.g. \cite[p. 31 and Lemma 3.34]{Sha00}.
Hence, the set $\mathcal A$ of principal ideals of $\mathbb Z$ generated by an atom is the set $\{p\mathbb Z: p \in \mathbb P\}$, and we conclude at once from Corollary \ref{cor:1} that $\sum_{p \in \mathbb P} 1/p = \infty$, because $\mathbb Q$ is a number field of degree $1$ and $N(m\mathbb Z) = m$ for every positive integer $m$.
\end{proof}

\begin{corollary}\label{cor:3}
	Let $K$ be a number field of degree $n$, and let $\mathcal A$ be the set of principal ideals $a\mathbb Z_K$ of $\mathbb Z_K$ such that $a$ is an atom and $a \mid_{\mathbb Z_K} p$ for some $p \in \mathbb P$. Then $\zeta_{K,\mathcal{A}}(1/n) = \infty$.
\end{corollary}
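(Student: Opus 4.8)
The plan is to deduce the corollary as an immediate special case of Theorem \ref{thm:1}, taking the set $X$ of positive integers to be $\mathbb P$ itself. First I would observe that $\mathbb P$ is indeed a set of positive integers, and that with this choice the set described in the hypotheses of Theorem \ref{thm:1}—namely, the principal ideals $a\mathbb Z_K$ generated by an atom $a$ with $a \mid_{\mathbb Z_K} m$ for some $m \in X$—coincides verbatim with the set $\mathcal A$ of the present statement, since here $m$ ranges precisely over the primes $p \in \mathbb P$. No re-indexing or reinterpretation is required; the two descriptions of $\mathcal A$ are literally the same once $X = \mathbb P$.

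The only hypothesis of Theorem \ref{thm:1} that then needs to be checked is the divergence $\sum_{m \in X} 1/m = \infty$, which for $X = \mathbb P$ is exactly the assertion $\sum_{p \in \mathbb P} 1/p = \infty$. But this is Euler's theorem, already established above as Theorem \ref{thm:2}. Feeding this single input into Theorem \ref{thm:1} yields at once $\zeta_{K,\mathcal A}(1/n) = \infty$, which is the desired conclusion.

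I expect no genuine obstacle here: the entire content of the corollary is absorbed into Theorem \ref{thm:1}, and the argument reduces to verifying that the two descriptions of $\mathcal A$ agree and that the divergence hypothesis holds. The one point worth a second glance is logical economy rather than difficulty—Theorem \ref{thm:2} was itself obtained from Corollary \ref{cor:1}, which rests on Theorem \ref{thm:1} applied to the full harmonic series—so I would confirm that the reasoning is not circular. It is not: the divergence of $\sum_{p \in \mathbb P} 1/p$ is derived independently (through the harmonic series over \emph{all} positive integers, not through $\mathbb P$), and only afterwards is it reused as the hypothesis needed to specialize Theorem \ref{thm:1} to $X = \mathbb P$.
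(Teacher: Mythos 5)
Your proposal is correct and is exactly the paper's argument: the paper's proof of this corollary is the one-line remark that it is ``straightforward from Theorems \ref{thm:1} and \ref{thm:2},'' i.e.\ apply Theorem \ref{thm:1} with $X = \mathbb P$ and use Euler's theorem to supply the divergence hypothesis. Your additional check that no circularity arises (since Theorem \ref{thm:2} is obtained from Corollary \ref{cor:1} via the full harmonic series) is a sensible precaution and consistent with the paper's logic.
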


\begin{proof}
	Straightforward from Theorems \ref{thm:1} and \ref{thm:2}.
\end{proof}

\section{Closing remarks}

1. Euler's theorem is commonly generalized to the ring of integers of a number field $K$ in a different way that done in this paper. One starts by considering the set $\mathcal P$ of prime ideals of $\mathbb Z_K$ and then proves that $\zeta_{K,\mathcal{P}}(1) = \infty$. This follows from the analogue of Euler's product formula for the Dedekind function $\zeta_K$ of $K$, combined with the fact that $\zeta_K$ has a pole at $1$, see e.g. \cite[Proposition 7.2 and Theorem 7.3]{Nar04}.
If $K$ is the rational field, then $\mathcal P = \{p \mathbb Z: p \in \mathbb P\}$ and $N(p \mathbb Z) = p$ for every $p \in \mathbb P$. So, similarly as in the proof of Corollary \ref{cor:2}, one recovers Euler's theorem.

2. That $\mathbb Z_K$ has infinitely many pairwise non-associated atoms (Corollary \ref{cor:1}) can be proved by a more standard approach.
For, let $R$ be an atomic integral domain containing infinitely many pairwise coprime non-unit elements (two elements $a, b \in R$ are coprime if $aR + bR = R$), and
note that these conditions are satisfied by $\mathbb Z_K$ (see the proof of Theorem \ref{thm:1} and recall that, by B\'ezout's identity, $a\mathbb Z + b \mathbb Z = \mathbb Z$ for all  $a, b \in \mathbb Z$ whose greatest common divisor is $1$). Every non-unit element in $R$ is contained in a maximal ideal of $R$, and each maximal ideal of $R$ is prime, see e.g. \cite[Corollary 3.10 and Remark 3.25(i)]{Sha00}.
But coprime elements $a, b \in R$ cannot belong to the same maximal ideal $\mathfrak m$, or else $R = aR + bR \subseteq \mathfrak m \subsetneq R$. Hence, $R$ has countably infinitely many ideals $\mathfrak p_1, \mathfrak p_2, \ldots$ that are both prime and maximal. It follows by the prime avoidance theorem, see e.g. \cite[Theorem 3.61 and Remark 3.62(i)]{Sha00}, that for each $i \in \mathbb N^+$ there exists a non-unit element $x_i \in \mathfrak p_i$ that is not in $\mathfrak p_j$ for every $j \in \mathbb N^+$ with $i \ne j$. So, using that $R$ is atomic (by hypothesis) and a product of finitely many elements of $R$ is contained in a prime ideal of $R$ only if at least one of such elements is in $R$ (essentially by definition), let $a_i$ be an atom of $R$ in $\mathfrak p_i$ such that $x_i \in a_i R$ ($i \in \mathbb N^+$), and observe that $a_i \notin \mathfrak p_j$ for all $j \in \mathbb N^+$ with $i \ne j$ (otherwise $x_i \in a_i R \subseteq \mathfrak p_j$). This implies that $a_1, a_2, \ldots$ is a sequence of pairwise non-associated atoms of $R$, and we are done.

3. Let $\mathcal A$ be as in Corollary \ref{cor:1}, and for each $n \in \mathbb N$ define $a_n := |\{\mathfrak a \in \mathcal A: N(\mathfrak a) = n \}|$.
It is clear that
\begin{equation}\label{equ:dirichlet-series}
\zeta_{K, \mathcal A}(s) = \sum_{\mathfrak a \in \mathcal A} \frac{1}{N(\mathfrak a)^s} = \sum_{n \ge 1} \frac{a_n}{n^s}, \quad (s \in \mathbb R).
\end{equation}
On the other hand, if $D$ denotes the Davenport constant of the ideal class group of $K$ (see \cite[\S{ }9.1.3]{Nar04} for terminology), then we have by \cite[Theorem 9.15]{Nar04} that there exists a real constant $C > 0$ such that 
\[
A_n(x) := \sum_{n \le x} a_n \sim C \frac{x}{\log x} \,(\log \log x)^{D-1}, \quad \text{as }x \to \infty.
\]
It thus follows from \cite[Theorem 7]{HarRie15} that the abscissa of convergence of the Dirichlet series $\sum_{n \ge 1} a_n/n^s$ is $1$, which, together with \eqref{equ:dirichlet-series}, shows that $\zeta_{K, \mathcal A}(s) = \infty$ for every real $s < 1$, which is much stronger than Corollary \ref{cor:1}. Note, however, that the ideas behind this stronger result lie deep in the analytic theory of number fields and, to the best of the author's knowledge, fall short of proving anything like Corollary \ref{cor:3}.

\end{document}